\documentclass[11pt]{article}
\usepackage[utf8]{inputenc}
\usepackage[english]{babel}
\usepackage{amssymb,amsmath}
\usepackage[usenames, dvipsnames]{color}
\usepackage{amsthm}
\usepackage{cite}
\let\OLDthebibliography\thebibliography
\renewcommand\thebibliography[1]{
  \OLDthebibliography{#1}
  \setlength{\parskip}{0pt}
  \setlength{\itemsep}{0pt plus 0.3ex}
}
\textheight=24cm 
\textwidth=17cm 
\oddsidemargin=0pt 
\topmargin=-1.5cm 
\parindent=24pt 
\parskip=0pt 
\tolerance=2000 
\flushbottom 

\newtheorem{theorem}{{\scshape Theorem}}[section]
\newtheorem{lemma}[theorem]{{\scshape Lemma	}}
\newtheorem{corollary}[theorem]{{\scshape Corollary}}
\newtheorem{proposition}[theorem]{{\scshape Proposition}}

\begin{document}
\title{Comparison of addition  and  multiplication in a skew brace}
\author{Baojun Li, Timur Nasybullov, Vyacheslav Zadvornov}
\date{}
\maketitle
\begin{abstract}
A.~Smoktunowicz and L.~Vendramin conjectured that if $A=(A,\oplus,\odot)$ is a finite skew brace with solvable additive group $A_{\oplus}$, then the multiplicative group $A_{\odot}$ of $A$ is also solvable. Proving or disproving this conjecture is currently an open problem. 

The interest to the conjecture of A.~Smoktunowicz and L.~Vendramin is due to the fact that, despite the fact that the addition and multiplication in a skew brace are related to each other, they can be very different. The present work focuses on comparing addition and multiplication in a skew brace. The results presented in the paper say that if $B$ is a characteristic subgroup of $A_{\oplus}$, then under certain conditions on elements $a,b\in A$ the images of $a\odot b$ and $a\oplus b$ coincide in $A_{\oplus}/B$.

As a corollary we conclude that if $A$ is a finite skew brace such that the derived subgroup $A_{\oplus}^{\prime}$ is cyclic, then $A_{\odot}$ is solvable. This statement gives a positive answer to the conjecture of A.~Smoktunowicz and L.~Vendramin in the case when $A_{\oplus}^{\prime}$ is a cyclic group.

~\\
\noindent\emph{Keywords: skew brace, solvable group, nilpotent group.} \\
~\\
\noindent\emph{Mathematics Subject Classification: 	20F16, 20D05, 20N99, 16T25.} 
\end{abstract}
\section{Introduction}
A skew brace $A=(A,\oplus,\odot)$ is an algebraic system with two binary algebraic operations  $\oplus$, $\odot$ such that $A_{\oplus}=(A,\oplus)$, $A_{\odot}=(A,\odot)$ are groups and the equality
\begin{equation}\label{mainleft}
a\odot(b\oplus c)=(a\odot b)\ominus a \oplus (a\odot c)
\end{equation}
holds for all $a,b,c\in A$, where $\ominus a$ denotes the inverse to $a$ element with respect to the operation~$\oplus$ (we denote by $a^{-1}$ the inverse to $a$ element with respect to $\odot$). The group $A_{\oplus}$ is called the additive group of a skew brace $A$, and the group $A_{\odot}$ is called the multiplicative group of a skew brace $A$. If $A_{\oplus}$ is abelian, then $A$ is called a brace.

Braces were introduced by Rump in \cite{Rum} in order to study non-degenerate involutive
set-theoretic solutions of the Yang-Baxter equation. Skew braces were introduced by Guarnieri and Vendramin in \cite{GuaVen} in order to study non-degenerate   
set-theoretic solutions of the Yang-Baxter equation which are not necessarily involutive. Skew braces have connections with other algebraic structures such as groups with exact factorizations, Zappa-Sz\'{e}p products,
triply factorized groups and Hopf-Galois extensions  \cite{SmoVen}. Some algebraic aspects of skew braces are studied in \cite{Chi, CedSmoVen, GorNas, Jes, Nas, NasNov, SmoVen, KonSmoVen,Trap,Rum, DamMas}. Thanks to its relations with the Yang-Baxter equation skew braces can be applied for constructing representations of virtual braid groups and invariants of virtual links\cite{BarNasmult1, BarNasmult2, BarNasmult3}. A big list of problems concerning skew braces is collected in \cite{Ven}.

Equality (\ref{mainleft}) makes the additive group $A_{\oplus}$ and the multiplicative group $A_{\odot}$ of a skew brace $A$ strongly connected with each other. The following problems formulated in \cite[Problem 19.90]{kt} tell about some of such connections.\medskip

1.  Does there exist a skew brace $A$ with nilpotent group $A_{\odot}$ but non-solvable group $A_{\oplus}$?

2.  Does there exist a finite skew brace $A$ with nilpotent group $A_{\odot}$ but non-solvable group $A_{\oplus}$?

3.  Does there exist a skew brace $A$ with solvable group $A_{\oplus}$ but non-solvable group $A_{\odot}$?

4.  Does there exist a  finite skew brace $A$ with solvable group $A_{\oplus}$ but non-solvable group $A_{\odot}$?\medskip

\noindent The first and the second problems from the list above were studied, for example, in \cite{Byo, SmoVen, pr2doo, Nas, Trap}. The most general known results about the first problem concern two-sided skew braces. In \cite{Nas} it is proved that if $A=(A,\oplus,\odot)$ is a two-sided skew brace such that $A_{\odot}$ is nilpotent of class $k$, then $A_{\oplus}$ is solvable of derived length at most $2k$. In \cite{Trap} this result is refined. It is proved that if $A=(A,\oplus,\odot)$ is a two-sided skew brace such that $A_{\odot}$ is solvable of derived length $k$, then $A_{\oplus}$ is solvable of derived length at most $k+1$. The complete answer to the second problem is obtained in \cite{pr2doo}, where it is proved that every finite skew brace with nilpotent multiplicative group has a solvable additive group. The third problem from the list above is completely solved in \cite{Nas}, where an example of an infinite skew brace with solvable additive group and non-solvable multiplicative group is constructed. The fourth problem from the list above was studied, for example, in \cite{Nas,SmoVen,GorNas}. One of the first results in this direction is introduced in \cite{SmoVen}. It states that if the additive group $A_{\oplus}$ of a finite skew brace $A$ is nilpotent, then $A_{\odot}$ is solvable. In \cite{Nas} the fourth problem from the list above is solved for two-sided skew braces. It is proved there that if the additive group $A_{\oplus}$ of a finite two-sided skew brace $A$ is solvable, then $A_{\odot}$ is solvable. The last known result in this direction is proved in \cite{GorNas}. It says that if $A$ is a finite skew brace such that $|A|$ is not divisible by $3$ and $A_{\oplus}$ is solvable, then $A_{\odot}$ is solvable.
 
The interest to the four problems formulated above is due to the fact that, despite the fact that the addition and multiplication in a skew brace are related to each other, they can be very different. The present work focuses on comparing addition and multiplication in a skew brace. The results presented in the paper say that if $B$ is a characteristic subgroup of $A_{\oplus}$, then under certain conditions on elements $a,b\in A$ the images of $a\odot b$ and $a\oplus b$ coincide in $A_{\oplus}/B$. There are several similar results in the paper. A typical one is the following statement\medskip

\noindent \textbf{Proposition \ref{firstproposition}.} \textit{Let $A$ be a skew brace, $B$ be a characteristic subgroup of $A_{\oplus}$, and $W$ be a set of group words. For $x\in A$ denote by $\overline{x}$ the image of $x$ under the natural homomorphism $A_{\oplus}\to A_{\oplus}/B$. If $W({\rm Aut}(A_{\oplus}/B))=1$, then for all $a,b\in W(A_{\odot})$ the equalities $\overline{a\odot b}=\overline{a\oplus b}$, $\overline{a^{-1}}=\overline{\ominus a}$ hold.}\medskip

As a corollary from the obtained results we prove the following theorem which gives a positive answer to the fourth problem from the list above  in the case when $A_{\oplus}^{\prime}$ is a cyclic group.\medskip

\noindent\textbf{Theorem \ref{maintheorem}.} \textit{Let $A$ be a finite skew brace such that $A_{\oplus}^{\prime}$ is cyclic. Then $A_{\odot}$ is solvable.}\medskip

The obtained theorem is not covered by the known results in this direction. If a group has a cyclic derived subgroup, then it is not necessarily nilpotent, so, the result from \cite{SmoVen} doesn't cover Theorem~\ref{maintheorem}. Skew brace $A$ from the formulation of Theorem~\ref{maintheorem} doesn't have to be two-sided, so, the result from \cite{Nas} doesn't cover Theorem~\ref{maintheorem}. Finally, the order of $A$ in Theorem~\ref{maintheorem} can be divisible by $3$, for example, when $A_{\oplus}^{\prime}=\mathbb{Z}_{3^n}$, so, the result from \cite{GorNas} doesn't cover Theorem~\ref{maintheorem}. 

The authors of \cite{DamMas} study finite groups in which every proper characteristic subgroup is cyclic, and skew braces which have such groups as additive groups. If $A$ is a finite skew brace such that every proper characteristic subgroup of $A_{\oplus}$ is cyclic, then either $A_{\oplus}^{\prime}$ is cyclic or $A_{\oplus}^{\prime}=A_{\oplus}$. So, Theorem~\ref{maintheorem} applies to such skew braces. 

~\\
\textbf{Acknowledgment.} The first and the second author were supported by a National Natural Science Foundation of China (12371021).

\section{Preliminaries}
In this section we give the necessary definitions, fix notations and prove some preliminary results. If $G$ is a group and $a,b\in G$, then the commutator $[a,b]$ is defined as $[a,b]=a^{-1}b^{-1}ab$. For subgroups $A,B$ of $G$ we denote by $[A,B]=\langle [a,b]~|~a\in A, b\in B\rangle$. The derived series 
$$G^{(1)}\geq G^{(2)}\geq G^{(3)}\geq \dots$$ 
of $G$ is inductively defined by the formulas  $G^{(1)}=G$, $G^{(i+1)}=[G^{(i)},G^{(i)}]$ for $i=1,2,3,\dots$ The derived subgroup $G^{(2)}$ is also denoted by $G^{\prime}$. If $H$ is a subgroup of $G$, then the centralizer of $H$ in $G$ is denoted by $C_G(H)$. The center of $G$ is denoted by $Z(G)$.

Let $w(x_1,x_2,\dots,x_n)$ be an element from the free group $F_n$ with the free generators $x_1,x_2,\dots,x_n$. This element is called a group word in letters $x_1,x_2,\dots,x_n$. For a group $G$ and elements $a_1,a_2,\dots,a_n\in G$ one can define the element $w(a_1,a_2,\dots,a_n)\in G$ writing $a_i$ instead of $x_i$ in $w$ for all $i=1,2,\dots,n$. The subgroup of $G$ generated by elements $w(a_1,a_2,\dots,a_n)\in G$ for all elements $a_1,a_2,\dots,a_n\in G$ is denoted by $w(G)$. If $W$ is a set of group words, and $G$ is a group, then the subgroup of $G$ generated by $w(G)$ for all $w\in W$ is denoted by $W(G)$. This subgroup is called the verbal subgroup of $G$ generated by the set of group words $W$. This subgroup is characteristic in $G$. All members of the derived series of a group are verbal subgroups of this group.

Let $A$ be a skew brace.  Equality (\ref{mainleft}) implies that the unit element of $A_{\oplus}$ coincides with the unit element of $A_{\odot}$. We denote this element by $0$. 
For $a\in A$ the map $\lambda_a: x\mapsto \ominus a\oplus (a\odot x)$ is an automorphism of the group $A_{\oplus}$. Moreover, the map $a\mapsto \lambda_a$ gives a homomorphism $A_{\odot}\to {\rm Aut}(A_{\oplus})$. From the definition of $\lambda_a$ it follows that for all $a,b\in A$ the equality
\begin{equation}\label{multsum}
a\odot b=a\oplus \lambda_a(b)
\end{equation}
holds. This equality implies the equality 
\begin{equation}\label{inverseoposit}
a^{-1}=\ominus\lambda_{a^{-1}}(a).
\end{equation}
The following result follows from formulas (\ref{multsum}), (\ref{inverseoposit}).
\begin{lemma}\label{easylemma} Let $A$ be a skew brace. If $B$ is a characteristic subgroup of $A_{\oplus}$, then $B$ is a subbrace of $A$.
\end{lemma}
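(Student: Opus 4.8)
The plan is to unwind the definition of a subbrace and then use the description of multiplication via the $\lambda$-maps. A subset $B\subseteq A$ is a subbrace of $A$ precisely when it is simultaneously a subgroup of $A_{\oplus}$ and of $A_{\odot}$, since identity (\ref{mainleft}) is automatically inherited from $A$. By hypothesis $B$ is already a subgroup of $A_{\oplus}$, so the only thing left to verify is that $B$ is closed under $\odot$ and under taking $\odot$-inverses; together with $0\in B$ this will make $(B,\odot)$ a subgroup of $A_{\odot}$ and finish the proof.

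For closure under $\odot$, I would take $a,b\in B$ and apply formula (\ref{multsum}): $a\odot b=a\oplus\lambda_a(b)$. Since $\lambda_a$ is an automorphism of $A_{\oplus}$ and $B$ is characteristic in $A_{\oplus}$, we get $\lambda_a(b)\in B$, and then $a\oplus\lambda_a(b)\in B$ because $B$ is a subgroup of $A_{\oplus}$. For closure under $\odot$-inverses, I would take $a\in B$ and apply formula (\ref{inverseoposit}): $a^{-1}=\ominus\lambda_{a^{-1}}(a)$. Again $\lambda_{a^{-1}}\in{\rm Aut}(A_{\oplus})$, so it preserves the characteristic subgroup $B$, hence $\lambda_{a^{-1}}(a)\in B$ and therefore $a^{-1}=\ominus\lambda_{a^{-1}}(a)\in B$.

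I do not expect a real obstacle here; the only subtle point worth stating explicitly is that in (\ref{inverseoposit}) the automorphism $\lambda_{a^{-1}}$ is indexed by $a^{-1}$, an element whose membership in $B$ is not yet established at that stage of the argument. This causes no difficulty, because $\lambda_c$ is an automorphism of $A_{\oplus}$ for every $c\in A$, and every automorphism of $A_{\oplus}$ fixes the characteristic subgroup $B$ setwise, so $\lambda_{a^{-1}}(B)=B$ regardless.
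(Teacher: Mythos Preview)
Your proof is correct and follows exactly the same approach as the paper: both argue that it suffices to show $a\odot b\in B$ and $a^{-1}\in B$, and both derive these from formulas (\ref{multsum}) and (\ref{inverseoposit}) using that $\lambda_a,\lambda_{a^{-1}}\in{\rm Aut}(A_{\oplus})$ preserve the characteristic subgroup $B$. Your explicit remark that $\lambda_{a^{-1}}$ preserves $B$ regardless of whether $a^{-1}\in B$ is a clarification the paper leaves implicit.
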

\begin{proof} It is necessary to prove that for all $a,b\in B$ the elements $a\odot b$ and $a^{-1}$ belong to $B$. The fact that the element $a\odot b$ belongs to $B$ follows from formula (\ref{multsum}) since $B$ is characteristic in $A_{\oplus}$ and $\lambda_a$ is an automorphism of~$A_{\oplus}$. The fact that $a^{-1}$ belongs to $B$ follows from formula (\ref{inverseoposit}) since $B$ is characteristic in $A_{\oplus}$, and $\lambda_{a^{-1}}$ is an automorphism of $A_{\oplus}$.
\end{proof}

If $A$ is a skew brace, and $a,b\in A$, then we denote by $[a,b]_{\oplus}$, $[a,b]_{\odot}$ respectively the additive commutator and the multiplicative commutator of $a,b$:
\begin{align*}
[a,b]_{\oplus}=\ominus a\ominus b\oplus a\oplus b, &&[a,b]_{\odot}=a^{-1}\odot b^{-1}\odot a\odot b.
\end{align*}

\section{Quotients where addition and multiplication coincide}
\begin{proposition}\label{firstproposition}Let $A$ be a skew brace, $B$ be a characteristic subgroup of $A_{\oplus}$, and $W$ be a set of group words. For $x\in A$ denote by $\overline{x}$ the image of $x$ under the natural homomorphism $A_{\oplus}\to A_{\oplus}/B$. If $W({\rm Aut}(A_{\oplus}/B))=1$, then for all $a,b\in W(A_{\odot})$ the equalities $\overline{a\odot b}=\overline{a\oplus b}$, $\overline{a^{-1}}=\overline{\ominus a}$ hold.
\end{proposition}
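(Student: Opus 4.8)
The plan is to exploit formulas (\ref{multsum}) and (\ref{inverseoposit}) together with the fact that the hypothesis $W({\rm Aut}(A_{\oplus}/B))=1$ forces the relevant $\lambda$-automorphisms to act trivially on the quotient $A_{\oplus}/B$. The key observation is the following: for $a\in A$ the map $\lambda_a$ is an automorphism of $A_{\oplus}$, and since $B$ is characteristic in $A_{\oplus}$, it is preserved by $\lambda_a$; hence $\lambda_a$ induces an automorphism $\overline{\lambda_a}$ of $A_{\oplus}/B$. Moreover $a\mapsto\lambda_a$ is a group homomorphism $A_{\odot}\to{\rm Aut}(A_{\oplus})$, so $a\mapsto\overline{\lambda_a}$ is a group homomorphism $A_{\odot}\to{\rm Aut}(A_{\oplus}/B)$. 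Therefore its image of $W(A_{\odot})$ lies in $W\bigl({\rm Aut}(A_{\oplus}/B)\bigr)$, which is trivial by hypothesis. So $\overline{\lambda_a}=\mathrm{id}$ for every $a\in W(A_{\odot})$; equivalently $\overline{\lambda_a(x)}=\overline{x}$ for all $a\in W(A_{\odot})$ and all $x\in A$.

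**Next I would** feed this into (\ref{multsum}). Take $a,b\in W(A_{\odot})$. By (\ref{multsum}), $a\odot b=a\oplus\lambda_a(b)$. Applying the natural homomorphism $A_{\oplus}\to A_{\oplus}/B$, which is a group homomorphism for $\oplus$, gives $\overline{a\odot b}=\overline{a}\oplus'\overline{\lambda_a(b)}$ where $\oplus'$ is the operation on the quotient (I will just write $\overline{a}\oplus\overline{\lambda_a(b)}$ by abuse of notation). Since $a\in W(A_{\odot})$, the previous paragraph gives $\overline{\lambda_a(b)}=\overline{b}$, so $\overline{a\odot b}=\overline{a}\oplus\overline{b}=\overline{a\oplus b}$, which is the first claimed equality.

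**For the second equality** I would use (\ref{inverseoposit}): $a^{-1}=\ominus\lambda_{a^{-1}}(a)$. Here one must check that $a^{-1}\in W(A_{\odot})$ whenever $a\in W(A_{\odot})$; this holds because $W(A_{\odot})$ is a (verbal, hence in particular a) subgroup of $A_{\odot}$, so it is closed under $\odot$-inversion. Thus $\overline{\lambda_{a^{-1}}}=\mathrm{id}$ on $A_{\oplus}/B$, giving $\overline{\lambda_{a^{-1}}(a)}=\overline{a}$, and applying the quotient homomorphism (which sends $\ominus x$ to the $\oplus$-inverse of $\overline{x}$) yields $\overline{a^{-1}}=\overline{\ominus\lambda_{a^{-1}}(a)}=\ominus\overline{\lambda_{a^{-1}}(a)}=\ominus\overline{a}=\overline{\ominus a}$.

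**The only real point requiring care** — and what I would single out as the main (though modest) obstacle — is the verification that $a\mapsto\overline{\lambda_a}$ is a well-defined group homomorphism into ${\rm Aut}(A_{\oplus}/B)$ and that consequently the image of the verbal subgroup $W(A_{\odot})$ sits inside the verbal subgroup $W\bigl({\rm Aut}(A_{\oplus}/B)\bigr)$; this uses that homomorphic images of verbal subgroups are contained in the corresponding verbal subgroups of the target (a standard fact, and the reason $W(\cdot)$ is characteristic). Everything else is a direct substitution into (\ref{multsum}) and (\ref{inverseoposit}). One should also note that the argument does not require $B$ to be normal in $A_{\odot}$ or anything of that sort — only that $B$ is characteristic in $A_{\oplus}$, which is exactly what is needed for $\lambda_a$ to descend to the quotient.
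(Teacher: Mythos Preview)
Your proof is correct and follows essentially the same approach as the paper: both arguments define the homomorphism $A_{\odot}\to{\rm Aut}(A_{\oplus}/B)$ induced by $\lambda$, observe that $W(A_{\odot})$ maps into $W({\rm Aut}(A_{\oplus}/B))=1$, and then substitute into formulas~(\ref{multsum}) and~(\ref{inverseoposit}). Your write-up is slightly more explicit in justifying that $a^{-1}\in W(A_{\odot})$ and that homomorphic images of verbal subgroups lie in the corresponding verbal subgroups, but the underlying argument is identical.
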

\begin{proof}
Consider the homomorphism $\psi:A_{\odot}\to {\rm Aut}(A_{\oplus}/B)$ which sends an element $a\in A_{\odot}$ to the automorphism 
$$\psi(a):t\oplus B\mapsto \lambda_a(t)\oplus B.$$
The map $\psi(a)$ is an automorphism of $A_{\oplus}/B$ since $\lambda_a$ is an automorphism of $A_{\oplus}$, and $B$ is characteristic in $A_{\oplus}$. The map $\psi:A_{\odot}\to {\rm Aut}(A_{\oplus}/B)$ is a homomorphism since the map $\lambda:A_{\odot}\to {\rm Aut}(A_{\oplus})$ given by $\lambda(a)=\lambda_a$ is a homomorphism, and $B$ is characteristic in $A_{\oplus}$. 

For every $a\in W(A_{\odot})$ the image $\psi(a)$ belongs to $W({\rm Aut}(A_{\oplus}/B))=1$, hence every $a\in W(A_{\odot})$ belongs to the kernel of $\psi$, where the kernel has the following form
\begin{align*}
{\rm ker}(\psi)&=\{a\in A_{\odot}~|~\psi(a)=id\}\\
&=\left\{a \in A_{\odot}~|~t \oplus B = \lambda_a(t) \oplus B~\text{for all}~t \in A\right\}\\
&=\{a\in A_{\odot}~|~\overline{\lambda_a(t)}=\overline{t}~\text{for all}~ t\in A\}.
\end{align*}
Therefore  due to formulas (\ref{multsum}), (\ref{inverseoposit}) for all $a,b\in W(A_{\odot})$ we have the equalities
\begin{align*}
\overline{a\odot b}=\overline{a\oplus\lambda_a(b)}=\overline{a\oplus b},&&\overline{a^{-1}}=\overline{\ominus\lambda_{a^{-1}}(a)}=\overline{\ominus a}.
\end{align*}
The statement is proved.
\end{proof}
\begin{corollary}\label{thefirstcorollary}Let $A$ be a skew brace, $B$ be a characteristic subgroup of $A_{\oplus}$ such that $A_{\oplus}^{(n)}\leq B$ for some $n$. If ${\rm Aut}(A_{\oplus}/B)$ is solvable of derived length $m$, then $A_{\odot}^{(m+n)}\leq B$.
\end{corollary}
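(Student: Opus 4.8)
The plan is to derive the corollary from Proposition~\ref{firstproposition}, applied to a verbal subgroup taken from the derived series of $A_{\odot}$, together with a homomorphism that recovers addition from multiplication on that subgroup.

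First I would fix a set of group words $W$ with $W(G)=G^{(m+1)}$ for every group $G$; such $W$ exists because every member of the derived series is a verbal subgroup. Since $\mathrm{Aut}(A_{\oplus}/B)$ is solvable of derived length $m$, we have $W(\mathrm{Aut}(A_{\oplus}/B))=\mathrm{Aut}(A_{\oplus}/B)^{(m+1)}=1$, so Proposition~\ref{firstproposition} applies and yields, for all $a,b\in W(A_{\odot})=A_{\odot}^{(m+1)}$, the equalities $\overline{a\odot b}=\overline{a\oplus b}$ and $\overline{a^{-1}}=\overline{\ominus a}$, where $\overline{x}$ is the image of $x$ in $A_{\oplus}/B$.

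The key point is that the restriction to $H:=A_{\odot}^{(m+1)}$ of the natural map $x\mapsto\overline{x}$ is a group homomorphism $\varphi\colon (H,\odot)\to A_{\oplus}/B$. Indeed, $B$ is characteristic, hence normal, in $A_{\oplus}$, so $A_{\oplus}\to A_{\oplus}/B$ is a homomorphism and $\overline{a\oplus b}$ equals the product of $\overline{a}$ and $\overline{b}$ in $A_{\oplus}/B$; combined with $\overline{a\odot b}=\overline{a\oplus b}$ from the previous step, this gives that $\varphi(a\odot b)$ is the product of $\varphi(a)$ and $\varphi(b)$ for all $a,b\in H$. Its kernel is $H\cap B$, which lies in $B$.

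Finally I would bring in the hypothesis $A_{\oplus}^{(n)}\leq B$, which says exactly that $(A_{\oplus}/B)^{(n)}=1$; hence every subgroup of $A_{\oplus}/B$, in particular $\varphi(H)$, satisfies $\varphi(H)^{(n)}=1$. Since $\varphi$ maps $H$ onto $\varphi(H)$, it maps $H^{(n)}$ onto $\varphi(H)^{(n)}=1$, so $H^{(n)}\leq\ker\varphi\leq B$. The argument then closes with the routine identity $H^{(n)}=\bigl(A_{\odot}^{(m+1)}\bigr)^{(n)}=A_{\odot}^{(m+n)}$ for iterated derived subgroups. I do not anticipate a genuine obstacle; the one thing that needs care is the index bookkeeping for the derived series in the paper's convention $G^{(1)}=G$ (so ``derived length $m$'' means $G^{(m+1)}=1$, and $\bigl(G^{(k)}\bigr)^{(l)}=G^{(k+l-1)}$), which is precisely what makes the bound $m+n$ come out rather than a shifted value.
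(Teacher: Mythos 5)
Your proposal is correct and follows the paper's proof: the same choice of a set of words $W$ with $W(G)=G^{(m+1)}$, and the same application of Proposition~\ref{firstproposition} to get $\overline{a\odot b}=\overline{a\oplus b}$ on $A_{\odot}^{(m+1)}$. The only difference is in packaging: where the paper descends the remaining $n$ steps by an explicit induction on products of commutators, showing $\overline{A_{\odot}^{(m+k)}}\leq\overline{A_{\oplus}^{(k)}}$, you phrase the same descent as the observation that $x\mapsto\overline{x}$ restricts to a group homomorphism $\bigl(A_{\odot}^{(m+1)},\odot\bigr)\to A_{\oplus}/B$ and that homomorphisms carry derived series into derived series --- a cleaner formulation of the identical idea, with the index bookkeeping $\bigl(G^{(k)}\bigr)^{(l)}=G^{(k+l-1)}$ handled correctly.
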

\begin{proof}Every member of the derived series of a group is a verbal subgroup. Denote by $W$ the set of group words such that $W(G)=G^{(m+1)}$ for every group $G$. Since ${\rm Aut}(A_{\oplus}/B)$ is solvable of derived length $m$, the equality $W({\rm Aut}(A_{\oplus}/B))=({\rm Aut}(A_{\oplus}/B))^{(m+1)}=1$ holds. Therefore from Proposition~\ref{firstproposition} for all $a,b\in W(A_{\odot})=A_{\odot}^{(m+1)}$ we have the equalities 
\begin{align}
\label{weneedthisguys}
\overline{a\odot b}=\overline{a\oplus b},&& \overline{a^{-1}}=\overline{\ominus a},
\end{align} 
where $\overline{x}$ denotes the image of $x$ under the natural homomorphism $A_{\oplus}\to A_{\oplus}/B$. 

Using induction on $k$ let us prove that $\overline{A_{\odot}^{(m+k)}}\leq\overline{A_{\oplus}^{(k)}}$. The basis of induction ($k=1$) is obvious. Suppose that the statement is true for $k$, and let us prove it for $k+1$. From equalities~(\ref{weneedthisguys}) we conclude that for all $a_1,a_2,\dots,a_r,b_1,b_2,\dots,b_r\in A_{\odot}^{(m+k)}$ the equality 
$$\overline{[a_1,b_1]_{\odot}\odot [a_2,b_2]_{\odot}\odot\dots\odot[a_r,b_r]_{\odot}}=\overline{[a_1,b_1]_{\oplus}\oplus[a_2,b_2]_{\oplus}\oplus\dots\oplus[a_r,b_r]_{\oplus}}$$
holds. From this equality and the induction conjecture we conclude that $\overline{A_{\odot}^{(m+k+1)}}\leq\overline{A_{\oplus}^{(k+1)}}$. Hence $\overline{A_{\odot}^{(m+n)}}\leq\overline{A_{\oplus}^{(n)}}=\overline{0}$ and $A_{\odot}^{(m+n)}\leq B$.
\end{proof}

\section{Skew braces with a cyclic characteristic subgroup in the additive group}
For a positive integer $n$ we denote by $\mathbb{Z}_n$ both the cyclic group of order $n$ and the ring of residues modulo $n$. We denote by $\mathbb{Z}$ both the infinite cyclic group and the ring of integers. The following three lemmas give the information about automorphism groups of some abelian groups.
\begin{lemma}\label{zeroaut}Let $p$ be an odd prime, and $n$ be a positive integer. Then ${\rm Aut}(\mathbb{Z}_{p^{n}})=\mathbb{Z}_{p^{n - 1}(p - 1)}$.
\end{lemma}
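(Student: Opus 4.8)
The plan is to realize $\mathrm{Aut}(\mathbb{Z}_{p^n})$ as the group of units of the residue ring $\mathbb{Z}_{p^n}$ and then to prove that this unit group is cyclic. Every endomorphism of the cyclic group $\mathbb{Z}_{p^n}$ is multiplication by some residue $k$, and such a map is bijective exactly when $k$ is invertible in the ring $\mathbb{Z}_{p^n}$; hence $\mathrm{Aut}(\mathbb{Z}_{p^n})\cong\mathbb{Z}_{p^n}^{*}$, a group of order $\varphi(p^n)=p^{n-1}(p-1)$. Since a cyclic group of order $p^{n-1}(p-1)$ is precisely $\mathbb{Z}_{p^{n-1}(p-1)}$, the statement reduces to showing that $\mathbb{Z}_{p^n}^{*}$ is cyclic.

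For $n=1$ this is the classical fact that the multiplicative group of the field $\mathbb{Z}_p$ is cyclic (a finite subgroup of the multiplicative group of a field is cyclic), so assume $n\geq 2$. I would then exhibit two elements of $\mathbb{Z}_{p^n}^{*}$ of orders $p-1$ and $p^{n-1}$; since $\gcd(p-1,p^{n-1})=1$, their product has order $p^{n-1}(p-1)$ and generates the group. For an element of order $p-1$: reduction modulo $p$ gives a surjection $\mathbb{Z}_{p^n}^{*}\to\mathbb{Z}_p^{*}$; choosing a preimage $g$ of a generator of $\mathbb{Z}_p^{*}$ and passing to $h=g^{p^{n-1}}$, one checks that $h$ still maps onto a generator modulo $p$ (because $p\equiv 1\pmod{p-1}$), while $h^{p-1}=g^{p^{n-1}(p-1)}=1$, so $h$ has order exactly $p-1$. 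For an element of order $p^{n-1}$ I claim one can take $1+p$; verifying this is the technical core of the proof and the only place where the oddness of $p$ is used.

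The key lemma is that for an odd prime $p$ and every integer $k\geq 0$ one has $(1+p)^{p^{k}}\equiv 1+p^{k+1}\pmod{p^{k+2}}$. This I would prove by induction on $k$, the case $k=0$ being trivial: writing $(1+p)^{p^{k}}=1+p^{k+1}(1+pm)$ for some integer $m$ and expanding the $p$-th power by the binomial theorem, the linear term $p\cdot p^{k+1}(1+pm)$ contributes $p^{k+2}$ modulo $p^{k+3}$, while every term $\binom{p}{j}\bigl(p^{k+1}(1+pm)\bigr)^{j}$ with $j\geq 2$ is divisible by $p^{k+3}$. For $j\geq 3$ this follows from $j(k+1)\geq k+3$; for $j=2$ it is exactly here that oddness enters, since $p\mid\binom{p}{2}$ when $p$ is odd, whereas for $p=2$ the term $\binom{2}{2}p^{2k+2}$ collides with $p^{k+2}$ and the lemma already fails at $k=0$ (indeed $(1+2)^{2}=9\not\equiv 1+2^{2}\pmod{2^{3}}$). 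Granting the lemma, taking $k=n-1$ gives $(1+p)^{p^{n-1}}\equiv 1\pmod{p^{n}}$, and taking $k=n-2$ gives $(1+p)^{p^{n-2}}\equiv 1+p^{n-1}\not\equiv 1\pmod{p^{n}}$, so $1+p$ has order exactly $p^{n-1}$; multiplying it by $h$ yields a generator of $\mathbb{Z}_{p^n}^{*}$. The only genuine obstacle is the inductive verification of the lemma, which is elementary but must be carried out carefully because of the exceptional behaviour at $p=2$.
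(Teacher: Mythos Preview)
Your proof is correct and follows the same route as the paper: identify $\mathrm{Aut}(\mathbb{Z}_{p^n})$ with the unit group $\mathbb{Z}_{p^n}^{*}$ and then argue that this group is cyclic. The only difference is one of detail---the paper simply invokes the existence of a primitive root modulo $p^n$ for odd $p$, whereas you supply a self-contained proof of that fact via the element $1+p$ of order $p^{n-1}$ and a lifted element of order $p-1$.
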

\begin{proof} Denote by $x$ the generator of $\mathbb{Z}_{p^n}$, and let $\varphi$ be an automorphism of $\mathbb{Z}_{p^n}$. Then $\varphi(x)=kx$, where $k$ is coprime with $p$, and $\varphi$ is completely defined by $k$. If $\varphi_1,\varphi_2$ are two automorphisms of $\mathbb{Z}_{p^n}$ with $\varphi_1(x)=k_1x$, $\varphi_2(x)=k_2x$, then $\varphi_1\varphi_2(x)=k_1k_2x$. Therefore ${\rm Aut}(\mathbb{Z}_{p^n})$ is isomorphic to the group $\mathbb{Z}_{p^n}^*$ of invertible elements of the ring $\mathbb{Z}_{p^n}$ under the multiplication. Since for every odd $p$ there exists a primitive root modulo $p^n$, the group $\mathbb{Z}_{p^n}^*$ is cyclic and is isomorphic to $\mathbb{Z}_{p^{n-1}(p-1)}$.
\end{proof}
\begin{lemma}\label{firstaut} ${\rm Aut}(\mathbb{Z}_{2})=1$, ${\rm Aut}(\mathbb{Z}_{4})=\mathbb{Z}_2$, ${\rm Aut}(\mathbb{Z}_{2^n})=\mathbb{Z}_2\times \mathbb{Z}_{2^{n-2}}$ for $n\geq3$.
\end{lemma}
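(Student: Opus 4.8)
\emph{Proof proposal.} The plan is to repeat the first step of the proof of Lemma~\ref{zeroaut}: an automorphism $\varphi$ of $\mathbb{Z}_{2^n}$ is determined by $\varphi(x)=kx$ with $k$ coprime to $2$, and composition corresponds to multiplication of the coefficients $k$, so ${\rm Aut}(\mathbb{Z}_{2^n})$ is isomorphic to the group $\mathbb{Z}_{2^n}^*$ of invertible elements of the ring $\mathbb{Z}_{2^n}$. For $n=1$ this group is trivial, and for $n=2$ it is $\{1,3\}\cong\mathbb{Z}_2$; these two cases settle the first two claims by inspection.

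For $n\geq3$ I would work inside $\mathbb{Z}_{2^n}^*$, which has order $2^{n-1}$, and exhibit two explicit subgroups whose direct product is the whole group. First I would compute the order of $5$ modulo $2^n$: by induction on $k\geq1$ one shows $5^{2^{k-1}}\equiv 1+2^{k+1}\pmod{2^{k+2}}$, which gives in particular that $5^{2^{n-3}}\not\equiv1\pmod{2^n}$ while $5^{2^{n-2}}\equiv1\pmod{2^n}$. Hence $5$ has order exactly $2^{n-2}$ and $\langle 5\rangle\cong\mathbb{Z}_{2^{n-2}}$. The induction step here — lifting the congruence from modulus $2^{k+2}$ to $2^{k+3}$ by squaring — is the only real computation and is the place where one must be slightly careful; everything else is bookkeeping.

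Next I would note that every power of $5$ is $\equiv1\pmod 4$, whereas $-1\equiv3\pmod 4$, so $-1\notin\langle5\rangle$; since $-1$ has order $2$, the subgroups $\langle-1\rangle\cong\mathbb{Z}_2$ and $\langle5\rangle\cong\mathbb{Z}_{2^{n-2}}$ intersect trivially. Their product is therefore direct and has order $2\cdot2^{n-2}=2^{n-1}=|\mathbb{Z}_{2^n}^*|$, forcing $\mathbb{Z}_{2^n}^*=\langle-1\rangle\times\langle5\rangle\cong\mathbb{Z}_2\times\mathbb{Z}_{2^{n-2}}$. Combining this with the identification ${\rm Aut}(\mathbb{Z}_{2^n})\cong\mathbb{Z}_{2^n}^*$ from the first paragraph completes the proof. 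If one prefers to avoid even the mild induction, an alternative is to quote the classical structure theorem for $\mathbb{Z}_{2^n}^*$ directly, but I expect the self-contained argument above to be cleaner in context.
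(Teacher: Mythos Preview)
Your proposal is correct and follows essentially the same outline as the paper: identify ${\rm Aut}(\mathbb{Z}_{2^n})$ with $\mathbb{Z}_{2^n}^{*}$, and for $n\geq 3$ produce an element of order $2^{n-2}$ by an inductive lifting-of-exponents computation. The paper uses $3$ rather than $5$ and closes the argument differently: rather than exhibiting the explicit complement $\langle -1\rangle$, it invokes the fact that there is no primitive root modulo $2^n$ (so $\mathbb{Z}_{2^n}^{*}$ is not cyclic) and then notes that an abelian group of order $2^{n-1}$ which is not cyclic yet contains an element of order $2^{n-2}$ can only be $\mathbb{Z}_2\times\mathbb{Z}_{2^{n-2}}$. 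Your direct-product decomposition $\langle -1\rangle\times\langle 5\rangle$ is a touch more self-contained, since it does not require the non-existence of a primitive root as a separate ingredient; the paper's version trades that for a shorter endgame once the non-cyclicity is assumed.
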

\begin{proof}The equalities ${\rm Aut}(\mathbb{Z}_{2})=1$, ${\rm Aut}(\mathbb{Z}_{4})=\mathbb{Z}_2$ are obvious. Let us prove that for $n\geq3$ the equality ${\rm Aut}(\mathbb{Z}_{2^n})=\mathbb{Z}_2\times \mathbb{Z}_{2^{n-2}}$ holds. Similar to Lemma~\ref{zeroaut} we conclude that ${\rm Aut}(\mathbb{Z}_{2^n})$ is isomorphic to the group $\mathbb{Z}_{2^n}^*$ of invertible elements of the ring $\mathbb{Z}_{2^n}$ under the multiplication. Since there are no primitive roots modulo $2^n$ for $n\geq3$, the group $\mathbb{Z}_{2^n}^*$ is not cyclic. 

Let us prove that $3$ is an element of order $2^{n-2}$ in $\mathbb{Z}_{2^n}^*$. Since $|\mathbb{Z}_{2^n}^*|=2^{n-1}$ and $\mathbb{Z}_{2^n}^*$ is not cyclic, we have $3^{2^{n-2}}=1$ in $\mathbb{Z}_{2^n}^*$, and it is enough to prove that $3^{2^{n-3}}\neq 1$ in $\mathbb{Z}_{2^n}^*$. Using induction on $k$ let us prove that $3^{2^{k}}\neq 1$ in $\mathbb{Z}_{2^{k+3}}^*$ for all $k$. The basis of induction ($k=1$) follows from the equality 
$$3^{2^{3-3}}=3\neq 1~({\rm mod}~8).$$
Suppose that the statement it true for some $k$, and let us prove it for $k+1$. We have the equality
$$
3^{2^{k+1}}-1=\left(3^{2^{k}}\right)^2-1=\left(3^{2^{k}}-1\right)\left(3^{2^{k}}+1\right).
$$
By the induction conjecture we conclude that $3^{2^{k}}-1\neq0$ in $\mathbb{Z}_{2^{k+3}}$. From this fact and the equality $$3^{2^{k}}+1=(-1)^{2^k}+1=2\neq 0~({\rm mod}~4)$$
we conclude that 
$$3^{2^{k+1}}-1=\left(3^{2^{k}}-1\right)\left(3^{2^{k}}+1\right)$$
is not equal to $0$ in $\mathbb{Z}_{2^{k+4}}$. Hence, $3^{2^{k}}\neq 1$ in $\mathbb{Z}_{2^{k+3}}^*$ for all $k$, and, in particular, $3^{2^{n-3}}\neq 1$ in $\mathbb{Z}_{2^{n}}^*$.

The group $\mathbb{Z}_{2^n}^*$ is abelian of order $2^{n-1}$, not cyclic, and it contains an element of order  $2^{n-2}$. The only group which satisfy this conditions is $\mathbb{Z}_2\times \mathbb{Z}_{2^{n-2}}$.
\end{proof}
\begin{lemma}\label{secondaut} For every positive integer $n$ the group ${\rm Aut}(\mathbb{Z}_2\times \mathbb{Z}_{2^{n}})$ is solvable.
\end{lemma}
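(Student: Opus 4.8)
\emph{Proof plan.} I would argue by induction on $n$. The base case $n=1$ is immediate: $\mathbb{Z}_2\times\mathbb{Z}_2$ is an elementary abelian group of rank two, so ${\rm Aut}(\mathbb{Z}_2\times\mathbb{Z}_2)$ is isomorphic to $GL_2(\mathbb{Z}_2)$, which has order $6$ and hence is solvable (it is isomorphic to the symmetric group $S_3$).

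Now let $n\geq 2$, write $G=\mathbb{Z}_2\times\mathbb{Z}_{2^n}$, and let $N=2^{n-1}G$. One checks directly that $N$ equals $\{0\}\times 2^{n-1}\mathbb{Z}_{2^n}$, that $N\cong\mathbb{Z}_2$, and that $G/N\cong\mathbb{Z}_2\times\mathbb{Z}_{2^{n-1}}$. The subgroup $N$ is characteristic in $G$ (it is a verbal subgroup, generated by the word $x^{2^{n-1}}$), so there is a natural homomorphism $\rho:{\rm Aut}(G)\to{\rm Aut}(G/N)$ sending an automorphism $\varphi$ to the automorphism $g+N\mapsto\varphi(g)+N$. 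The image of $\rho$ is a subgroup of ${\rm Aut}(\mathbb{Z}_2\times\mathbb{Z}_{2^{n-1}})$, which is solvable by the induction hypothesis (or by the base case if $n=2$).

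It remains to show that $K=\ker\rho$ is solvable, and in fact I would show that $K$ embeds into the elementary abelian group $\mathbb{Z}_2\times\mathbb{Z}_2$. First note that every $\varphi\in{\rm Aut}(G)$ restricts to the identity on $N$, since $N$ is characteristic and ${\rm Aut}(N)={\rm Aut}(\mathbb{Z}_2)$ is trivial. If $\varphi\in K$, then $\varphi(g)-g\in N$ for all $g\in G$, so one gets a map $\theta_\varphi:G\to N$, $\theta_\varphi(g)=\varphi(g)-g$, which is a group homomorphism because $G$ is abelian. A short computation, using that $\theta_\psi(g)\in N$ and that $\varphi$ fixes $N$ pointwise, gives $\theta_{\varphi\psi}=\theta_\varphi+\theta_\psi$, so $\varphi\mapsto\theta_\varphi$ is a homomorphism $K\to{\rm Hom}(G,N)$; it is injective since $\theta_\varphi=0$ forces $\varphi={\rm id}$. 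As ${\rm Hom}(G,N)\cong{\rm Hom}(\mathbb{Z}_2\times\mathbb{Z}_{2^n},\mathbb{Z}_2)\cong\mathbb{Z}_2\times\mathbb{Z}_2$, the group $K$ is a $2$-group, hence solvable. Therefore ${\rm Aut}(G)$ is an extension of the solvable group $\rho({\rm Aut}(G))$ by the solvable group $K$, so it is solvable, which closes the induction.

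The computations here are all routine; the only points needing a little care are the verification that $\varphi\mapsto\theta_\varphi$ is multiplicative (this is exactly where one uses that $\varphi$ restricts to the identity on $N$) and the identifications of $G/N$ and of ${\rm Hom}(G,N)$. A shorter but less self-contained alternative for the step $n\geq 2$ is to invoke Burnside's basis theorem: the kernel of ${\rm Aut}(G)\to{\rm Aut}(G/\Phi(G))$ is a $2$-group, and $G/\Phi(G)\cong\mathbb{Z}_2\times\mathbb{Z}_2$ has solvable automorphism group $S_3$, so ${\rm Aut}(G)$ is solvable; I would keep the inductive argument above if a fully elementary proof is wanted.
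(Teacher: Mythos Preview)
Your argument is correct. The induction is set up properly: $N=2^{n-1}G$ is characteristic, every automorphism fixes $N$ pointwise because ${\rm Aut}(\mathbb{Z}_2)=1$, the map $\varphi\mapsto\theta_\varphi$ into ${\rm Hom}(G,N)\cong\mathbb{Z}_2\times\mathbb{Z}_2$ is a well-defined injective homomorphism (your verification of multiplicativity is exactly the right one), and the extension of two solvable groups is solvable. The alternative via the Frattini quotient is also valid.

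The paper takes a different and shorter route for $n\geq 2$: it simply cites the order formula for automorphism groups of finite abelian groups (Hillar--Rhea), obtains $|{\rm Aut}(\mathbb{Z}_2\times\mathbb{Z}_{2^n})|=2^{n+1}$, and concludes that ${\rm Aut}(\mathbb{Z}_2\times\mathbb{Z}_{2^n})$ is a $2$-group, hence nilpotent and solvable. So the paper gets a stronger conclusion (a $2$-group, not merely solvable) at the price of an external reference, while your inductive/Frattini argument is entirely self-contained but only yields solvability. Either is adequate for the use made of the lemma later in the paper.
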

\begin{proof}If $n=1$, then ${\rm Aut}(\mathbb{Z}_2\times \mathbb{Z}_{2^{n}})={\rm Aut}(\mathbb{Z}_2\times \mathbb{Z}_{2})=S_3$ is a metabelian group. If $n> 1$, then let us use \cite[Theorem 4.1]{HilRhe}, where the orders of automorphism groups of finite abelian groups are calculated. According to this theorem $|{\rm Aut}(\mathbb{Z}_2\times \mathbb{Z}_{2^{n}})|=2^{n+1}$. Therefore, ${\rm Aut}(\mathbb{Z}_2\times \mathbb{Z}_{2^{n}})$ is a $2$-group. Hence it is nilpotent and therefore is solvable.
\end{proof}

\begin{proposition}\label{secondproposition}Let $A$ be a skew brace, $D$ be a cyclic characteristic subgroup of $A_{\oplus}$, $B=C_{A_{\oplus}}(D)$. For $x\in A$ denote by $\overline{x}$ the image of $x$ under the natural homomorphism $A_{\oplus}\to A_{\oplus}/B$. Then there exists a positive integer $m$ such that  for all $a,b\in A_{\odot}^{(m)}$ the equalities $\overline{a\odot b}=\overline{a\oplus b}$, $\overline{a^{-1}}=\overline{\ominus} a$ hold.
\end{proposition}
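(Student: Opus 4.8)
The plan is to prove the statement in its strongest form, namely that one may take $m=1$, so that the two equalities hold for \emph{all} $a,b\in A_{\odot}$. First note that $B=C_{A_{\oplus}}(D)$ is characteristic in $A_{\oplus}$, being the centralizer of the characteristic subgroup $D$; hence we are in the setting of Proposition~\ref{firstproposition}, and I would work with the homomorphism $\psi\colon A_{\odot}\to{\rm Aut}(A_{\oplus}/B)$ from its proof, the one sending $a$ to $t\oplus B\mapsto\lambda_a(t)\oplus B$, whose kernel is $\{a\in A_{\odot}\mid\overline{\lambda_a(t)}=\overline{t}\ \text{for all}\ t\in A\}$. Everything reduces to the claim that $\psi$ is trivial, i.e.\ ${\rm ker}(\psi)=A_{\odot}$. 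In fact I would isolate the following purely group-theoretic fact: if $D$ is a characteristic subgroup of a group $G$ whose automorphism group ${\rm Aut}(D)$ is abelian (e.g.\ $D$ cyclic), then every automorphism of $G$ acts as the identity on $G/C_G(D)$. Since each $\lambda_a$ is an automorphism of $A_{\oplus}$, this applied to $G=A_{\oplus}$ gives exactly ${\rm ker}(\psi)=A_{\odot}$.

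To prove this fact I would bring in the conjugation action of $A_{\oplus}$ on its normal subgroup $D$: let $\kappa\colon A_{\oplus}\to{\rm Aut}(D)$ be $\kappa(g)\colon d\mapsto g\oplus d\ominus g$. Its kernel is precisely $B=C_{A_{\oplus}}(D)$, so $\kappa$ induces an embedding $A_{\oplus}/B\hookrightarrow{\rm Aut}(D)$, and the crucial point is that ${\rm Aut}(D)$ is abelian because $D$ is cyclic. Fix $a\in A_{\odot}$. As $D$ is characteristic, $\lambda_a(D)=D$, so $\mu_a:=\lambda_a|_{D}\in{\rm Aut}(D)$; applying $\lambda_a$ to $\kappa(g)(d)=g\oplus d\ominus g$ and using that $\lambda_a$ is an automorphism of $A_{\oplus}$ preserving $D$ gives, for all $g\in A_{\oplus}$ and $d\in D$,
$$\mu_a\bigl(\kappa(g)(d)\bigr)=\lambda_a(g)\oplus\lambda_a(d)\ominus\lambda_a(g)=\kappa\bigl(\lambda_a(g)\bigr)\bigl(\mu_a(d)\bigr),$$
which rearranges to $\kappa(\lambda_a(g))=\mu_a\,\kappa(g)\,\mu_a^{-1}$ inside ${\rm Aut}(D)$. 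Since ${\rm Aut}(D)$ is abelian, the right-hand side equals $\kappa(g)$, hence $\kappa(\lambda_a(g))=\kappa(g)$, i.e.\ $\lambda_a(g)\ominus g\in{\rm ker}(\kappa)=B$, i.e.\ $\overline{\lambda_a(g)}=\overline{g}$ for every $g\in A_{\oplus}$. Thus $a\in{\rm ker}(\psi)$.

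With $\psi$ trivial, for arbitrary $a,b\in A_{\odot}$ formulas~(\ref{multsum}) and~(\ref{inverseoposit}) yield $\overline{a\odot b}=\overline{a\oplus\lambda_a(b)}=\overline{a\oplus b}$ and $\overline{a^{-1}}=\overline{\ominus\lambda_{a^{-1}}(a)}=\overline{\ominus a}$, so the proposition holds with $m=1$. The only genuine obstacle is spotting the group-theoretic fact above; after that the computation is mechanical. (If one instead wanted to stay within the framework of Corollary~\ref{thefirstcorollary} and Lemmas~\ref{zeroaut}--\ref{secondaut}, one could, for finite $D$, decompose $D$ into its cyclic primary components $D_p$, observe $B=\bigcap_p C_{A_{\oplus}}(D_p)$, note that each $A_{\oplus}/C_{A_{\oplus}}(D_p)$ embeds into ${\rm Aut}(D_p)$, which by those lemmas is either cyclic or a subgroup of $\mathbb{Z}_2\times\mathbb{Z}_{2^n}$ and hence has solvable automorphism group, and then apply Proposition~\ref{firstproposition} to each $D_p$ and take $m$ larger than all the resulting derived lengths; this reproves the statement but with a worse $m$.)
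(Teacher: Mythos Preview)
Your argument is correct and in fact sharper than the paper's: you establish the conclusion with $m=1$, whereas the paper only produces some $m$ depending on the derived lengths of several auxiliary automorphism groups. The paper proceeds exactly along the lines of your parenthetical alternative: it breaks a finite $D$ into its primary cyclic components $D_i$, uses that $A_{\oplus}/C_{A_{\oplus}}(D_i)$ embeds into ${\rm Aut}(D_i)$, invokes Lemmas~\ref{zeroaut}--\ref{secondaut} to see that this quotient is cyclic or of the form $\mathbb{Z}_2\times\mathbb{Z}_{2^k}$ and hence has solvable automorphism group, applies Proposition~\ref{firstproposition} to each $B_i=C_{A_{\oplus}}(D_i)$, and finally intersects over $i$. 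Your route bypasses all of this structure theory by isolating the single observation that any automorphism of $G$ must act on $G/C_G(D)$ through conjugation inside the abelian group ${\rm Aut}(D)$, hence trivially; this handles the finite and infinite cases uniformly and renders Lemmas~\ref{zeroaut}--\ref{secondaut} unnecessary for the present proposition. The trade-off is that the paper's approach keeps everything inside the verbal-subgroup framework of Proposition~\ref{firstproposition}, while yours steps outside it to a direct kernel computation; but your version is both shorter and strictly stronger.
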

\begin{proof}Let us first consider the case when $D$ is finite. Let $D=\langle x\rangle=\mathbb{Z}_n$ be a cyclic group of order $n$ generated by $x$. Let $n$ has the prime factors decomposition 
$$n=2^{\alpha_0}p_1^{\alpha_1}p_2^{\alpha_2}\dots p_s^{\alpha_s},$$
where $p_1,p_2,\dots,p_s$ are distinct odd primes, $\alpha_1,\alpha_2,\dots,\alpha_s$ are positive integers, and $\alpha_0$ is non-negative integer. 
For uniformity, denote by $p_0=2$. For $i=0,1,\dots,s$ denote by $D_i$ a subgroup of $D$ generated by the element $(n/p_i^{\alpha_i})x$. This group is cyclic of order $p_i^{\alpha_i}$. It is characteristic in $D$. Since $D$ is characteristic in $A_{\oplus}$, and $D_i$ is characteristic in $D$, the group $D_i$ is characteristic in $A_{\oplus}$. 

Fix $i\in \{0,1,\dots,s\}$ and consider the homomorphism $\varphi_i:A_{\oplus}\to {\rm Aut}(D_i)$ which sends an element $a\in A$ to the conjugation automorphism 
$$\varphi_i(a):y\mapsto \ominus a\oplus y\oplus a.$$
The map $\varphi_i(a)$ gives an automorphism of $B$ since the conjugation by $a$ in $A_{\oplus}$ is an automorphism of $A_{\oplus}$, and $D_i$ is characteristic in $A_{\oplus}$. The kernel of $\varphi_i$ has the form 
$${\rm ker}(\varphi_i)=\{a\in A_{\oplus}~|~\varphi_i(a)=id\}=\{a\in A_{\oplus}~|~\ominus a\oplus y\oplus a=y~\text{for all}~y\in D_i\}=C_{A_{\oplus}}(D_i).$$
Denote by $C_{A_{\oplus}}(D_i)=B_i$. By the homomorphism theorem we have the inclusion $A_{\oplus}/B_i\hookrightarrow{\rm Aut}(D_i)$. From Lemmas~\ref{zeroaut},~\ref{firstaut} it follows that $A_{\oplus}/B_i$ is either cyclic or of the form $\mathbb{Z}_2\times \mathbb{Z}_{2^k}$ for appropriate $k$. Therefore from Lemmas~\ref{zeroaut},~\ref{firstaut},~\ref{secondaut} it follows that ${\rm Aut}(A_{\oplus}/B_i)$ is solvable. Denote by $m_i$ the derived length of ${\rm Aut}(A_{\oplus}/B_i)$. 

From Proposition~\ref{firstproposition} it follows that  for all $a,b\in A_{\odot}^{(m_i)}$ the equalities $\overline{a\odot b}=\overline{a\oplus b}$, $\overline{a^{-1}}=\overline{\ominus a}$ hold, where $\overline{x}$ denotes the image of $x$ under the natural homomorphism $A_{\oplus}\to A_{\oplus}/B_i$. These equalities mean that for all $i=1,2,\dots,s$ the inclusions
\begin{align}\label{alottoone}(a\odot b)\ominus(a\oplus b)\in B_i,&&(a^{-1})\ominus (\ominus a)\in B_i\end{align}
hold for all $a,b\in A_{\odot}^{(m_i)}$. Denote by $m={\rm max}\{m_0,m_1,\dots,m_s\}$. Then from (\ref{alottoone}) it follows that for all $a,b\in A_{\odot}^{(m)}$ we have 
\begin{align*}(a\odot b)\ominus(a\oplus b)\in \bigcap_{i=1}^sB_i=B,&&(a^{-1})\ominus (\ominus a)\in \bigcap_{i=1}^sB_i=B.\end{align*}
Therefore for all $a,b\in A_{\odot}^{(m)}$ the equalities $\overline{a\odot b}=\overline{a\oplus b}$, $\overline{a^{-1}}=\overline{\ominus} a$ hold. 

The case when when $D$ is the infinite cyclic group is similar. \end{proof}

\begin{corollary}\label{cornice}Let $A$ be a skew brace, and $D$ be a cyclic characteristic subgroup of $A_{\oplus}$ such that $D\leq Z\left(A_{\oplus}^{(n)}\right)$ for some $n$. Then there exists a positive integer $m$ such that  $A_{\odot}^{(m+n)}\leq C_{A_{\oplus}}(D)$. 
\end{corollary}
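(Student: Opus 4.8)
The plan is to obtain Corollary~\ref{cornice} from Proposition~\ref{secondproposition} exactly the way Corollary~\ref{thefirstcorollary} is obtained from Proposition~\ref{firstproposition}. First I would put $B=C_{A_{\oplus}}(D)$ and observe that $B$ is a characteristic subgroup of $A_{\oplus}$, being the centralizer of the characteristic subgroup $D$. The hypothesis $D\leq Z\!\left(A_{\oplus}^{(n)}\right)$ says precisely that every element of $A_{\oplus}^{(n)}$ commutes with every element of $D$, i.e. $A_{\oplus}^{(n)}\leq C_{A_{\oplus}}(D)=B$. Thus we are in the situation of Corollary~\ref{thefirstcorollary}, with the characteristic subgroup $B$ containing $A_{\oplus}^{(n)}$.

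Next I would apply Proposition~\ref{secondproposition} to this $D$ and the associated $B=C_{A_{\oplus}}(D)$: it produces a positive integer $m$ such that for all $a,b\in A_{\odot}^{(m)}$ the equalities $\overline{a\odot b}=\overline{a\oplus b}$ and $\overline{a^{-1}}=\overline{\ominus a}$ hold, where $\overline{x}$ is the image of $x$ under the natural homomorphism $A_{\oplus}\to A_{\oplus}/B$. Enlarging $m$ if necessary (so that these equalities hold on $A_{\odot}^{(m+1)}\subseteq A_{\odot}^{(m)}$), these are exactly the equalities~(\ref{weneedthisguys}) that drive the proof of Corollary~\ref{thefirstcorollary}.

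From this point the argument is the induction on $k$ already carried out in Corollary~\ref{thefirstcorollary}: from the coincidence of $\odot$ and $\oplus$ modulo $B$ applied to products of multiplicative commutators $[a_1,b_1]_{\odot}\odot\dots\odot[a_r,b_r]_{\odot}$ of elements of $A_{\odot}^{(m+k)}$, together with the inductive hypothesis $\overline{A_{\odot}^{(m+k)}}\leq\overline{A_{\oplus}^{(k)}}$, one deduces $\overline{A_{\odot}^{(m+k+1)}}\leq\overline{A_{\oplus}^{(k+1)}}$; the base case $k=1$ is trivial. Taking $k=n$ and using $A_{\oplus}^{(n)}\leq B$, i.e. $\overline{A_{\oplus}^{(n)}}=\overline{0}$, gives $\overline{A_{\odot}^{(m+n)}}=\overline{0}$, that is $A_{\odot}^{(m+n)}\leq B=C_{A_{\oplus}}(D)$, as required.

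I do not expect a real obstacle here; the corollary is essentially a repackaging of Proposition~\ref{secondproposition} through the verbal-subgroup induction of Corollary~\ref{thefirstcorollary}. The only points needing a word of justification are that the centralizer of a characteristic subgroup is characteristic (so that Proposition~\ref{secondproposition} applies and the bar notation is well defined) and that $D\leq Z\!\left(A_{\oplus}^{(n)}\right)$ is equivalent to $A_{\oplus}^{(n)}\leq C_{A_{\oplus}}(D)$; both are immediate. The only minor nuisance is the off-by-one discrepancy between the index ``$m+1$'' appearing in Corollary~\ref{thefirstcorollary} and the ``$m$'' delivered by Proposition~\ref{secondproposition}, which is absorbed harmlessly into the choice of the constant $m$.
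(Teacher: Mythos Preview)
Your proposal is correct and follows essentially the same approach as the paper: the paper's proof simply says to rerun the argument of Corollary~\ref{thefirstcorollary} with $B=C_{A_{\oplus}}(D)$, invoking Proposition~\ref{secondproposition} in place of Proposition~\ref{firstproposition} and using $D\leq Z\!\left(A_{\oplus}^{(n)}\right)\Rightarrow A_{\oplus}^{(n)}\leq C_{A_{\oplus}}(D)$. Your treatment of the harmless index shift between the $m+1$ of Corollary~\ref{thefirstcorollary} and the $m$ of Proposition~\ref{secondproposition} is appropriate.
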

\begin{proof}The proof is the same as the proof of Corollary~\ref{thefirstcorollary} for $B=C_{A_{\oplus}}(D)$ using Proposition~\ref{secondproposition} instead of Proposition~\ref{firstproposition} and taking into account that since $D\leq Z\left(A_{\oplus}^{(n)}\right)$, we must have the inclusion $A_{\oplus}^{(n)}\leq C_{A_{\oplus}}(D)$.\end{proof}
\section{Finite skew braces with a cyclic derived subgroup of the additive group}
The following theorem is proved in \cite{SmoVen}.
\begin{theorem}\label{SmoVenNilpotent} Let $A$ be a finite skew brace such that $A_{\oplus}$ is nilpotent. Then $A_{\odot}$ is solvable.
\end{theorem}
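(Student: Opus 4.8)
The plan is to exhibit a Sylow system in $A_{\odot}$ — a family of pairwise permutable Sylow subgroups, one for each prime dividing $|A_{\odot}|$ — and then to invoke the classical theorem of P.~Hall that a finite group admitting a Sylow system (equivalently, a finite group possessing a Hall $p^{\prime}$-subgroup for every prime $p$) is solvable.

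First I would decompose the additive group. Being finite and nilpotent, $A_{\oplus}$ is the internal direct product $A_{\oplus}=P_1\times\cdots\times P_k$ of its Sylow subgroups $P_i$, and we set $p_i^{n_i}=|P_i|$. For any subset $S\subseteq\{1,\dots,k\}$ the Hall subgroup $\prod_{i\in S}P_i$ is characteristic in $A_{\oplus}$, since in a finite nilpotent group it coincides with the set of elements of $\{p_i:i\in S\}$-power order; by Lemma~\ref{easylemma} it is therefore a subbrace of $A$. In particular every $P_i$, and every $D_{ij}:=P_i\times P_j$ with $i\neq j$, is a subbrace of $A$.

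Next I would identify the Sylow subgroups of $A_{\odot}$ and prove permutability. Since $|A_{\odot}|=|A|=|A_{\oplus}|=\prod_i p_i^{n_i}$ and $(P_i)_{\odot}$ is a subgroup of $A_{\odot}$ of order $|P_i|=p_i^{n_i}$, each $(P_i)_{\odot}$ is a Sylow $p_i$-subgroup of $A_{\odot}$. Fix $i\neq j$. As subbraces of $D_{ij}$, both $P_i$ and $P_j$ give subgroups $(P_i)_{\odot},(P_j)_{\odot}$ of $(D_{ij})_{\odot}$, and $|(D_{ij})_{\odot}|=|D_{ij}|=p_i^{n_i}p_j^{n_j}$. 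On the level of underlying sets $(P_i)_{\odot}\cap(P_j)_{\odot}=P_i\cap P_j=\{0\}$, so the subgroup product $(P_i)_{\odot}\,(P_j)_{\odot}$ has cardinality $p_i^{n_i}p_j^{n_j}$ and hence equals all of $(D_{ij})_{\odot}$. In particular $(P_i)_{\odot}\,(P_j)_{\odot}$ is a subgroup, so $(P_i)_{\odot}$ and $(P_j)_{\odot}$ permute. Thus $\{(P_1)_{\odot},\dots,(P_k)_{\odot}\}$ is a Sylow system of $A_{\odot}$, and Hall's theorem yields that $A_{\odot}$ is solvable.

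I expect the main pitfall to be the temptation to prove too much. One would like each $P_i$ to be an ideal of the skew brace $A$, which would force $A_{\odot}=(P_1)_{\odot}\times\cdots\times(P_k)_{\odot}$ and even make $A_{\odot}$ nilpotent; but a characteristic subgroup of $A_{\oplus}$ need not be normal in $A_{\odot}$, and $A_{\odot}$ can indeed fail to be nilpotent here. The argument must therefore settle for the weaker statement that the $(P_i)_{\odot}$ merely pairwise permute, and this is precisely why one applies Lemma~\ref{easylemma} not only to the $P_i$ but also to the two-prime characteristic subgroups $D_{ij}$: a bare factorization $A_{\odot}=(P_{\sigma(1)})_{\odot}\cdots(P_{\sigma(k)})_{\odot}$ in a single fixed order would not be enough to conclude solvability once $k\geq 3$.
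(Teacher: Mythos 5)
Your proof is correct. The paper itself gives no proof of this theorem --- it is quoted from \cite{SmoVen} --- and your argument (Sylow decomposition of the nilpotent group $A_{\oplus}$, each Hall subgroup being characteristic and hence a subbrace by Lemma~\ref{easylemma}, pairwise permutability of the resulting Sylow subgroups of $A_{\odot}$ via the two-prime subbraces $D_{ij}$, and P.~Hall's solvability criterion) is essentially the argument given there.
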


The following theorem gives a positive answer to the conjecture of A.~Smoktunowicz and L.~Vendramin in the case when $A_{\oplus}^{\prime}$ is a cyclic group.
\begin{theorem}\label{maintheorem}
Let $A$ be a finite skew brace such that $A_{\oplus}^{\prime}$ is cyclic. Then $A_{\odot}$ is solvable.
\end{theorem}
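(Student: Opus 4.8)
The plan is to run the argument of Corollary~\ref{cornice} on the cyclic characteristic subgroup $D=A_{\oplus}^{\prime}$ and then to invoke the theorem of Smoktunowicz and Vendramin (Theorem~\ref{SmoVenNilpotent}) on the centralizer subbrace $C_{A_{\oplus}}(D)$.

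\textbf{Step 1: setting up the cyclic subgroup.} I would take $D=A_{\oplus}^{\prime}$, which is characteristic in $A_{\oplus}$ (being a verbal subgroup) and cyclic by hypothesis. Since $D$ is cyclic it is abelian, so $A_{\oplus}^{(2)}=A_{\oplus}^{\prime}=D$, $A_{\oplus}^{(3)}=1$, and $Z(A_{\oplus}^{(2)})=Z(D)=D$; in particular the hypothesis $D\leq Z(A_{\oplus}^{(n)})$ of Corollary~\ref{cornice} holds with $n=2$. Thus Corollary~\ref{cornice} yields a positive integer $m$ with $A_{\odot}^{(m+2)}\leq C_{A_{\oplus}}(D)$. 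Write $B=C_{A_{\oplus}}(D)$.

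\textbf{Step 2: the subbrace $B$ has nilpotent additive group.} The subgroup $B=C_{A_{\oplus}}(D)$ is characteristic in $A_{\oplus}$, being the centralizer of a characteristic subgroup. As $D$ is abelian, $D\leq C_{A_{\oplus}}(D)=B$, and since every element of $B$ commutes with every element of $D$, we get $D\leq Z(B)$. On the other hand $B\leq A_{\oplus}$ gives $B^{\prime}\leq A_{\oplus}^{\prime}=D$. Hence $B^{\prime}\leq D\leq Z(B)$, so the group $B$ is nilpotent of class at most $2$. By Lemma~\ref{easylemma}, $B$ is a subbrace of $A$; it is finite with nilpotent additive group $B_{\oplus}=B$, so Theorem~\ref{SmoVenNilpotent} applies and the multiplicative group $B_{\odot}$ is solvable. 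Since $B$ is a subbrace, $B_{\odot}$ is a subgroup of $A_{\odot}$; as $A_{\odot}^{(m+2)}$ is a subgroup of $A_{\odot}$ contained in the set $B$, it is a subgroup of $B_{\odot}$ and hence solvable. Therefore $A_{\odot}^{(m+2+k)}=1$ for some $k$, that is, $A_{\odot}$ is solvable.

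\textbf{Where the difficulty lies.} The genuinely substantial work is already packaged into Corollary~\ref{cornice}, and behind it into Proposition~\ref{secondproposition} together with the computation of the relevant automorphism groups in Lemmas~\ref{zeroaut}--\ref{secondaut}: one cannot run this argument with $B=A_{\oplus}^{\prime}$ itself, because $\mathrm{Aut}(A_{\oplus}/A_{\oplus}^{\prime})$ may fail to be solvable, so passing to the centralizer $C_{A_{\oplus}}(D)$ — where the relevant automorphism group embeds into $\mathrm{Aut}(D)$ of a \emph{cyclic} group, and is therefore solvable by the three automorphism lemmas — is essential. Given those inputs the remaining assembly is routine; the only minor points needing care are the degenerate case $A_{\oplus}^{\prime}=1$, where $B=A_{\oplus}$ is already abelian and Theorem~\ref{SmoVenNilpotent} applies to $A$ directly, and the bookkeeping that a subgroup of $A_{\odot}$ lying set-theoretically inside the subbrace $B$ is in fact a subgroup of $B_{\odot}$.
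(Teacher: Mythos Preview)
Your proof is correct and follows exactly the same strategy as the paper's own proof: apply Corollary~\ref{cornice} with $D=A_{\oplus}^{\prime}$ and $n=2$ to get $A_{\odot}^{(m+2)}\leq C_{A_{\oplus}}(A_{\oplus}^{\prime})$, observe that this centralizer is a characteristic subbrace with nilpotent additive group (class at most~$2$), and conclude via Theorem~\ref{SmoVenNilpotent}. Your write-up is in fact more explicit than the paper's in verifying the hypothesis $D\leq Z(A_{\oplus}^{(2)})$ and in justifying the nilpotency of $B$ via $B^{\prime}\leq D\leq Z(B)$.
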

\begin{proof} For arbitrary elements $a,b,c\in C_{A_{\oplus}}(A_{\oplus}^{\prime})$ we have the equality $[[a,b]_{\oplus},c]_{\oplus}$. Therefore the group $C_{A_{\oplus}}(A_{\oplus}^{\prime})$ is nilpotent. This group is a characteristic subgroup of $A_{\oplus}$, therefore by Lemma~\ref{easylemma} it forms a brace with nilpotent additive group. From Theorem~\ref{SmoVenNilpotent} it follows that the multiplicative group of $C_{A_{\oplus}}(A_{\oplus}^{\prime})$ is solvable. From Corollary~\ref{cornice} it follows that there exists a positive integer $m$ such that 
$$A_{\odot}^{(m+2)}\leq C_{A_{\oplus}}(A_{\oplus}^{\prime}).$$
From this inclusion and the fact that the mutiplicative group of $C_{A_{\oplus}}(A_{\oplus}^{\prime})$ is solvable it follows that the multiplicative group of $A$ is solvable.
\end{proof}

{\small

\medskip

\medskip

\noindent
Baojun Li\\
School of Mathematics and Statistics, Nantong University, Jiangsu 226019, P. R. China\\
libj0618@outlook.com

~\\
Timur Nasybullov\\
School of Mathematics and Statistics, Nantong University, Jiangsu 226019, P. R. China\\
Novosibirsk State University, Pirogova 1, 630090 Novosibirsk, Russia\\
Sobolev Institute of Mathematics, Acad. Koptyug avenue 4, 630090 Novosibirsk, Russia\\
timur.nasybullov@mail.ru

~\\
Vyacheslav Zadvornov\\
Novosibirsk State University, Pirogova 1, 630090 Novosibirsk, Russia\\
v.zadvornov@g.nsu.ru

}

\begin{thebibliography}{00}
\bibitem{BarNasmult1} V.~Bardakov, T.~Nasybullov, Multi-switches and representations of braid groups, J. Algebra App, V.~23, N.~3, 2024, 2430003.
\bibitem{BarNasmult2} V.~Bardakov, T.~Nasybullov, Multi-switches and virtual knot invariants, Topology App., V.~293, 2021, 107552.
\bibitem{BarNasmult3} V.~Bardakov, T.~Nasybullov, Multi-switches, representations of virtual braids, and invariants of virtual links, Algebra Logic, V.~59, N.~4, 2020, 341--345.
\bibitem{Byo}
N.~Byott, Solubility criteria for Hopf-Galois structures, New York J. Math., V.~21, 2015, 883--903.
\bibitem{CedSmoVen}
F.~Ced\'o, A.~Smoktunowicz, L.~Vendramin, Skew left braces of nilpotent type, 	Proc. London Math. Soc., V.~118, N.~6, 2019, 1367--1392.
\bibitem{Chi}
L. N. Childs, Skew braces and the Galois correspondence for Hopf Galois structures, J.~Algebra, V.~511, 2018, 270--291.
\bibitem{DamMas}
M. Damele, F. Mastrogiacomo, Finite groups in which every proper characteristic subgroup is cyclic, ArXiv:Math/2508.03648.
\bibitem{GorNas}
I. Gorshkov, T. Nasybullov, Finite skew braces with solvable additive group, J.~Algebra, V.~574, 2021, 172--183.
\bibitem{GuaVen}
L. Guarnieri, L. Vendramin, Skew braces and the Yang-Baxter equation, Math. Comp., V.~86, N.~307, 2017, 2519--2534.
\bibitem{HilRhe}
C. Hillar, D. Rhea, Automorphisms of finite abelian groups, The American mathematical monthly, V. 114, N. 10, 2007, 917--923.
\bibitem{Jes}
E. Jespers, L. Kubat, A. Van Antwerpen, L. Vendramin, Factorizations of skew braces, Math. Annalen, V.~375. N.~3-4, 2019, 1649--1663.
\bibitem{KonSmoVen}
A.~Konovalov, A.~Smoktunowicz, L.~Vendramin, On skew braces and their ideals, Experimental Mathematics, V.~30, N.~1, 2021, 95--104. 
\bibitem{kt}
The Kourovka notebook, Unsolved problems in group theory. Edited by V.~D.~Mazurov and E.~I.~Khukhro, 20-th. ed.. Russian Academy of Sciences Siberian Division. Institute of Mathematics, Novosibirsk, 2022.
\bibitem{Nas}
T. Nasybullov, Connections between properties of the additive and the multiplicative groups of a two-sided skew brace,  J.~Algebra, V.~540, 2019, 156--167.
\bibitem{NasNov}
T. Nasybullov, I. Novikov, Classification of $\lambda$-homomorphic braces on $\mathbb{Z}^2$, Communications in algebra, V.~53, N.~11, 2025, 4695--4709.
\bibitem{Rum}
W.~Rump, Braces, radical rings, and the quantum Yang-Baxter equation, J.~Algebra, V.~307, N.~1, 2007, 153--170.
\bibitem{SmoVen}
A.~Smoktunowicz, L.~Vendramin, On skew braces (with an appendix by N.~Byott and L.~Vendramin), J.~Comb. Algebra, V.~2, N.~1, 2018, 47--86.
\bibitem{Trap}
S. Trappeniers, On two-sided skew braces, J. Algebra, V. 631, 2023, 267--286.
\bibitem{pr2doo}
C. Tsang, Q. Chao, On the solvability of regular subgroups in the holomorph of a finite solvable group, Int. J.  Algebra Comput., V.~30, N.~2, 2020, 253--265.
\bibitem {Ven}
L.~Vendramin, Problems on skew left braces,  Advances in Group Theory and Applications, V.~7, 2019, 15--37.
\end{thebibliography}
\end{document}